\newcommand{\cP}{{\mathcal P}}
\newcommand{\cL}{{\mathcal L}}
\newcommand{\F}{{\mathbb F}}
\newcommand{\inti}{{\rm in}}
\newcommand{\AP}{A_{\mathcal{P}}}
\newcommand{\AL}{A_{\mathcal{L}}}
\newtheorem{theorem}{Theorem}[section]
\newtheorem{problem}[theorem]{Problem}
\newtheorem{lemma}[theorem]{Lemma}
\newtheorem{remark}[theorem]{Remark}
\newtheorem{constr}[theorem]{\bf Construction}
\newtheorem{prop}[theorem]{\bf Proposition}
\theoremstyle{definition} 
\newtheorem{defi}[theorem]{\bf Definition}
\DeclareMathOperator{\PG}{{PG}}
\title{Partitioning the projective plane to two incidence-rich parts }
\author{Zoltán Lóránt Nagy\thanks{ELTE Linear Hypergraphs  Research Group,
		E\"otv\"os Lor\'and University, Budapest, Hungary. Email: {zoltan.lorant.nagy@ttk.elte.hu}. The author is supported by the Hungarian Research Grant (NKFIH)  PD. 134953. and K. 124950}  }
\date{}
\begin{document}
	
	\maketitle
	
	\begin{abstract} An internal or friendly partition of a vertex set  $V(G)$ of a graph $G$ is a partition to two nonempty sets $A\cup B$ such that  every vertex has at least as many neighbours in its own class as in the other one.
		Motivated by Diwan's existence proof  on internal partitions of graphs with high girth, we give constructive proofs for the existence of internal partitions in the incidence graph of projective planes and discuss its geometric properties. In addition, we determine exactly the maximum possible difference between the  sizes of the neighbor set in  its own class and the neighbor set of the other class, that can be attained for all vertices at the same time for the incidence graphs of desarguesian planes of square order. \\
		
		\noindent \textit{Keywords}: projective planes, internal partition, friendly partition, expander mixing lemma, subgeometries, maximal $(k;n)$-arcs
	\end{abstract}
	\section{Introduction}

	An {\em internal} or {\em friendly} partition of a graph is a partition of the vertices into two nonempty sets so that every vertex has at least as many neighbours in its own class as in the other one. Denote  the degree of a vertex $v$ of a graph $G$ by $d(v)$, and the number of neighbors of a vertices of $v$ belonging to a subset $A$
	of the vertex set $V(G)$ by $d_A(v)$. The notation $G|_{U}$ stands for the subgraph of $G$ induced by the vertex set $U$.  Then the condition on a partition $V(G) = A\cup B$ to be internal or friendly read as follows:  $d_A(v) \geq  d(v)/2, \  \forall v \in A$ and  $d_B(v) \geq d(v)/2, \ \forall v \in B$. 
	
	The problem of finding or showing the existence of internal partitions in graphs has a long history. The same concept was introduced by Gerber and Kobler \cite{gerber2004classes} under the name of {\em satisfactory partitions}, while Kristiansen, Hedetniemi and Hedetniemi considered a related problem on {\em graph alliances} \cite{kristiansen2002introduction}. The survey of Bazgan, Tuza and Vanderpooten \cite{bazgan2010satisfactory} describes early results on the area and discusses the complexity of the problem as well as how to find such partitions. 
	\\
	Stiebitz \cite{stiebitz1996decomposing} investigated a more general version where the degrees of the vertices might have different bounds depending on which part they belong to, and he proved that for every pair of functions
	$a, b : V \rightarrow \mathbb{N}^+$ satisfying $$ d_G(v) \geq a(v) +b(v) + 1, \ \forall v \in V,$$ there exists a partition of the vertex set $V(G) = A\cup B$, such that  $d_A(v) \geq  a(v), \  \forall v \in A$ and  $d_B(v) \geq b(v), \ \forall v \in B$.
	Kaneko proved  \cite{kaneko1998decomposition} that even the conditions  $d_A(v) \geq  a(v), \ \forall v \in A$ and $ d_B(v) \geq b(v),  \ \forall v \in B$ can be satisfied for arbitrary functions  $a, b : V \rightarrow \mathbb{N}^+$ such that $ d_G(v) \geq a(v) +b(v), \ \forall v \in V$, provided that $G$ is triangle-free. Note that the condition $ d_G(v) \geq a(v) +b(v), \ \forall v \in V$ cannot be assumed in general, since there are graphs which has no partition satisfying  $d_A(v) \geq  a(v), \forall v \in A$ and $ d_B(v) \geq b(v), \forall v \in B$.
	Likewise, there exist infinitely many graphs having no internal partitions, e.g., $K_{2n}$ and $K_{2n+1,2n+1}$. It is still a wide open problem, formulated by DeVos \cite{devos}, to determine whether for given $d$ there are only finitely many $d$-regular graphs without having an internal partitions. For the latest progress on the $d$-regular case, see \cite{ban, 5reg, linial, Zu}.  Several large classes of graphs have been shown to have internal partitions. Diwan proved  via an existence proof \cite{Diwan}   that if  a graph of girth at least $5$ has  minimum degree  at least $a+b-1$, then its vertex set has a suitable partition $A\cup B$ with minimum degrees $\delta_{G |_A}\geq a$ and $\delta_{G |_B}\geq b$, on the graph induced by $A$ and $B$, respectively.

	Diwan notes \cite{Diwan}  that it is not clear if the result above can be improved further for graphs with larger girth, since
	"\textit{the incidence graph of the projective plane of order 3 has 26 vertices,
		$\delta(G) = 4$, and girth six. It can be verified that this graph cannot be decomposed into two subgraphs with minimum degree 3.}"
	
	Our goal is two-wise in this paper. First, to  construct internal partitions in the incidence graphs of all (desarguesian) projective planes and hence to examine the behind geometric structure. 
	Second, to determine exactly how much the statement can be improved for the graph family in view, the incidence graphs of projective planes. In order to do so, we introduce the notion of \textit{$t$-internal partitions}.
	
	\begin{defi}[Intimacy of a graph]  A partition of the vertex set $V(G) = A\cup B$ of  a graph $G$ is called a \textit{ $t$-internal  or $t$-friendly partition} if  $d_A(v) \geq  d(v)/2+t, \  \forall v \in A$ and  $d_B(v) \geq d(v)/2+t, \ \forall v \in B$. The maximum integer value of $t$ for which there exists a $t$-internal partition in a graph is called the \textit{intimacy of the graph} and denoted by $\inti(G)$.
	\end{defi}
	
	Observe that the previously mentioned results of Stiebitz \cite{stiebitz1996decomposing} imply that $\inti(G)\geq -1$ for all regular graphs and  $\inti(G)\geq 0 \Longleftrightarrow$ $G$ has an internal partition.
	The intimacy is closely related to the Cheeger constant. Indeed, the isoperimetric number or Cheeger constant of a graph is $$\min_{ A\subset V, \ |A|\le |V|/2,} \frac{e(A,V\setminus A)}{|A|}.$$
	
	In other words, it is determined by a vertex subset $A$ of size at most $n/2$  which minimizes the average $\sum_{x\in A} \frac{d_B(x)}{|A|}=\sum_{x\in B} \frac{d_A(x)}{|B|}$. By contrast, for the intimacy, the value  ${d_B(x)}$ is required to be small for every $x \in A$ and the value  ${d_A(x)}$ is required to be small for every $x \in B$.

	Our main result is as follows.
	
	\begin{theorem}\label{mainsquare}
		The intimacy of the incidence graph $\mathcal{G}$ of a desarguesian projective plane of order $q$, $\PG(2,q)$,  is equal to $\lfloor\frac{\sqrt{q}-1}{2}\rfloor$ provided that $q$ is a square. 
	\end{theorem}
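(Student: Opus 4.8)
The plan is to prove matching upper and lower bounds on $\inti(\mathcal G)$, where $\mathcal G$ is the $(q+1)$-regular bipartite incidence graph on the $N=q^2+q+1$ points and $N$ lines of $\PG(2,q)$. The crucial spectral input is that the point--line incidence matrix $M$ satisfies $MM^{\mathsf T}=qI+J$, so $M$ has singular value $q+1$ on the all-ones vector and $\sqrt q$ on its orthogonal complement; this is exactly the setting of the expander mixing lemma, with second singular value $\sqrt q$. Throughout I write a partition $V(\mathcal G)=A\cup B$ as four sets $\mathcal P_A,\mathcal P_B$ (points) and $\mathcal L_A,\mathcal L_B$ (lines), and record the $t$-internal conditions as incidence inequalities such as $e(\mathcal P_A,\mathcal L_A)\ge x\lceil\frac{q+1}2+t\rceil$ and $e(\mathcal P_A,\mathcal L_A)\ge y\lceil\frac{q+1}2+t\rceil$ (with $x=|\mathcal P_A|$, $y=|\mathcal L_A|$), together with their analogues for $B$; writing ceilings here is the point where the parity of $\sqrt q$ will enter.

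For the upper bound, I would apply the sharp bipartite expander mixing lemma to the pair $(\mathcal P_A,\mathcal L_A)$, giving $e(\mathcal P_A,\mathcal L_A)\le \frac{(q+1)xy}N+\frac{\sqrt q}N\sqrt{x(N-x)\,y(N-y)}$, and combine it with the degree lower bound $e(\mathcal P_A,\mathcal L_A)\ge \max(x,y)\lceil\frac{q+1}2+t\rceil$. A first observation is that the internal conditions are self-regularising: a nonempty $\mathcal L_A$ forces $x\ge \frac{q+1}2+t$ and a nonempty $\mathcal P_A$ forces $y\ge\frac{q+1}2+t$, so all four sets are bounded away from $0$ and $N$, which rules out the degenerate corners. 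I would then treat the resulting inequality as an optimisation over $x,y$: substituting $x=N(\tfrac12+a)$ and $y=N(\tfrac12+b)$, one checks that the feasible value of $t$ is maximised at the balanced point $a=b=0$, where the bound collapses to $\lceil\frac{q+1}2+t\rceil\le \frac{q+1}2+\frac{\sqrt q}2$. Reading off integrality now settles both parities at once: when $\sqrt q$ is odd the ceiling is vacuous and $t\le\sqrt q/2$ forces $t\le\frac{\sqrt q-1}2$, while when $\sqrt q$ is even the ceiling contributes an extra $\tfrac12$ and forces $t\le\frac{\sqrt q}2-1$; in both cases $t\le\lfloor\frac{\sqrt q-1}2\rfloor$.

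For the matching lower bound I would give an explicit construction from a Baer subgeometry partition, i.e. a partition of the points of $\PG(2,q)$ into $q-\sqrt q+1$ Baer subplanes $\PG(2,\sqrt q)$, which exists for every square $q$ (for instance from the index-$(q-\sqrt q+1)$ subgroup of a Singer cycle). The key structural lemma, proved by a one-line incidence count against the partition, is that every line of $\PG(2,q)$ meets exactly one subplane of the partition in $\sqrt q+1$ points and every other subplane in a single point; call this the line's special subplane. I would then split the (odd number of) subplanes into two classes of sizes $\frac{q-\sqrt q}2$ and $\frac{q-\sqrt q}2+1$, put a point into $A$ or $B$ according to the class of its subplane, and put a line into $A$ or $B$ according to the class of its special subplane. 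The same count shows that through a point of the smaller class there pass exactly $\sqrt q+\frac{q-\sqrt q}2=\frac{q+\sqrt q}2$ same-class lines, and dually a line of the smaller class carries exactly $\frac{q+\sqrt q}2$ same-class points; comparing $\frac{q+\sqrt q}2$ with $\frac{q+1}2+t$ yields a $t$-internal partition with $t=\lfloor\frac{\sqrt q-1}2\rfloor$, matching the upper bound exactly.

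The step I expect to be the main obstacle is making the upper-bound optimisation fully rigorous away from balance: the feasible region in $(x,y)$ is two-dimensional and constrained, and although the symmetric sub-case $x=y$ and numerics strongly suggest the maximum sits at $x=y=N/2$, a clean convexity or rearrangement argument is needed to exclude unbalanced optima without a lengthy case analysis. On the construction side the only delicate points are citing the existence of Baer subgeometry partitions uniformly in $q$ and verifying the special-subplane lemma; once these are in place the degree counts are exact, so no slack is lost and the two bounds meet. Maximal $(k;n)$-arcs provide an alternative, duality-respecting source of two-valued intersection numbers that can be used to cross-check the even-order case.
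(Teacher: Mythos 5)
Your proposal follows essentially the same route as the paper on both sides: the lower bound via a Baer subgeometry partition with the subplanes (and the lines assigned to their ``special'' subplane) split into classes of sizes $\frac{q-\sqrt q}{2}$ and $\frac{q-\sqrt q}{2}+1$, giving induced degrees $\frac{q+\sqrt q}{2}$ and $\frac{q+\sqrt q}{2}+1$; and the upper bound via the bipartite expander mixing lemma with $\lambda_2=\sqrt q$ played against the degree lower bound $e(\mathcal P_A,\mathcal L_A)\ge \max(x,y)\bigl(\frac{q+1}{2}+t\bigr)$. The one step you flag as an obstacle --- excluding unbalanced optima rigorously --- is handled in the paper by a short two-case monotonicity argument rather than a convexity argument: after normalising so that $y=|\AL|\le \frac{q^2+q}{2}$ and $y\le x=|\AP|$, one assumes $t\ge \sqrt q/2$ and splits on whether $x<\frac{N}{2}$ or $x>\frac{N}{2}$ (equality is impossible as $N=q^2+q+1$ is odd). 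In the first case the right-hand side of the mixing bound is increasing in $y$ on $[0,x]$, so setting $y=x$ reduces the claim to $\frac{(q+1-\sqrt q)x}{N}+\sqrt q<\frac{q+\sqrt q+1}{2}$, which is exactly $x<\frac{N}{2}$; in the second case one bounds $y(1-y/N)\le N/4$ and uses $N-x<x$ to get the error term strictly below $\frac{\sqrt q}{2}x$. Both cases yield a strict contradiction, so $t<\sqrt q/2$, and integrality alone gives $t\le\lfloor\frac{\sqrt q-1}{2}\rfloor$ for both parities of $\sqrt q$ --- your ceiling refinement for even $\sqrt q$ is correct but not actually needed once the inequality is strict. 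With that case analysis supplied, your argument is complete and matches the paper's.
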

	Hence,  while the conditions concerning the internal partitions of the incidence graphs of $\Pi_q$ can not be improved in general, it actually can be for an infinite family, and the maximal possible improvement can be determined exactly.

	The paper is organised as follows. Section 2. is a preliminary section where we collect some useful well known eigenvalue techniques and results that  will be applied later on, such as the Expander Mixing Lemma. Section 3. is devoted to the proof of our main result, Theorem \ref{mainsquare}. In Section 4. we describe three different constructions for internal partitions in the incidence graph of projective planes of odd order. The simplest construction is geometric and works in every such plane. The second one is of algebraic flavour and exploits the coordinatisation. For the third one, the existence of a so-called \textit{ oval}, a $q+1$-arc is required in the plane. Finally, we show internal partitions in projective planes of even order $\PG(2,2^h)$ ($h>1$) relying on the existence of so-called \textit{maximal $(k; n)$-arc}s. Observe that when $q$ is even, the stricter conditions $d_A(v)>d_B(v) \ \forall v\in A$ and  $d_B(v)>d_A(v) \  \forall v\in B$ hold. Some  open problems and other results concerning the Zarankiewicz problem and the problem of large biholes are discussed in Section 5.
	
	\section{Preliminaries}

	We start this section  by recalling some basic facts and statements about projective planes as well as setting the notations.
	A \textit{projective plane} $\Pi$ is a point-line incidence structure such that any two points are on a unique
	line, and any two lines has a unique intersection point. It is said to be of order $q$ when all
	lines have $q+1$ points and all points are on $q+1$ lines. A projective plane  $\Pi_q$ of order
	$q$ has $q^2 +q+1$ points and as many lines.
	A desarguesian projective plane of order $q$, denoted by $\PG(2,q)$, is a plane over a finite Galois field $\F_q$,  such that the points  are the equivalence classes of the set $\F_q^3 \setminus {(0, 0, 0)}$ modulo the equivalence relation
	$\textbf{x} \equiv t\textbf{x}$, for all $t$ in $\F_q\setminus \{0\}$. Lines 
	consist of the equivalence classes of the non-zero points of $2$-dimensional subspaces of $ \F_q^3$, hence one can assign  as well the equivalence classes of the set $\F_q^3 \setminus {[0, 0, 0]}$ modulo the equivalence relation
	$\textbf{x} \equiv t\textbf{x}$, for all $t$ in $\F_q\setminus \{0\}$, such that 
	the line $[x:y:z]$ consists of those points $(a:b:c)$ for which $ax+by+cz=0$.
	The 
	triplets representing points and lines are called \textit{homogeneous coordinates} of the points and lines.

	Let $\mathcal{G}$ be a regular graph of valency $k$ on $n$ vertices with ordered spectrum $\lambda_1\ge \lambda_2\ge \ldots \ge\lambda_n$ and let $\lambda_2^*$ denote  $\lambda_2^*:=\max\{|\lambda_2|, |\lambda_n|\}$.
	
	The celebrated \textit{expander mixing lemma}
	due to Alon and Chung \cite{expmix} (see also \cite{Haemers}), says that there number of edges spanned by two subsets van be expressed as the expected number of edges according to the edge density
	between two subsets plus an error term, depending on $\lambda_2^*$.
	
	\begin{lemma}[Expander mixing lemma]\label{expandermixing}
		Let $S$ and $T$ be two subsets of the vertex set of a $d$-regular $\mathcal{G}$, of sizes $s$ and
		$t$, respectively. Let $e(S,T)$ be the number of edges $xy$ with $x \in  S$ and $y \in T$.
		Then we have 
		$$\mid e(S,T)-\frac{dst}{n}\mid \leq \lambda_2^*\sqrt{st(1-\frac{s}{n})(1-\frac{t}{n})}.$$
	\end{lemma}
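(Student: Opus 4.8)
The plan is to give the standard spectral proof, decomposing the characteristic vectors of $S$ and $T$ in an orthonormal eigenbasis of the adjacency matrix. First I would let $A$ denote the adjacency matrix of $\mathcal{G}$; since $\mathcal{G}$ is $d$-regular, $A$ is real and symmetric, its largest eigenvalue is $\lambda_1 = d$, and the normalized all-ones vector $v_1 = \frac{1}{\sqrt{n}}\mathbf{1}$ is a corresponding eigenvector. Fixing an orthonormal eigenbasis $v_1,\ldots,v_n$ with eigenvalues $\lambda_1\ge\cdots\ge\lambda_n$, I would record the basic identity $e(S,T) = \mathbf{1}_S^{\top} A\, \mathbf{1}_T$, where $\mathbf{1}_S,\mathbf{1}_T$ are the characteristic vectors of $S$ and $T$.

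Next I would expand $\mathbf{1}_S = \sum_i \alpha_i v_i$ and $\mathbf{1}_T = \sum_i \beta_i v_i$. Pairing with $v_1$ gives the leading coefficients $\alpha_1 = \langle \mathbf{1}_S, v_1\rangle = s/\sqrt{n}$ and $\beta_1 = t/\sqrt{n}$. Substituting into the quadratic form yields $e(S,T) = \sum_i \lambda_i \alpha_i \beta_i$, whose leading term equals $\lambda_1 \alpha_1 \beta_1 = dst/n$, precisely the expected count dictated by the edge density. Consequently the deviation $e(S,T) - \frac{dst}{n}$ equals the tail $\sum_{i\ge 2}\lambda_i \alpha_i \beta_i$, which is the quantity to be bounded.

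To bound the tail I would first use the definition of $\lambda_2^*$: since the eigenvalues are ordered, $|\lambda_i|\le \lambda_2^*$ for every $i\ge 2$, whence $\bigl|\sum_{i\ge 2}\lambda_i \alpha_i \beta_i\bigr| \le \lambda_2^* \sum_{i\ge 2}|\alpha_i \beta_i|$. An application of the Cauchy--Schwarz inequality then gives $\sum_{i\ge 2}|\alpha_i\beta_i| \le \bigl(\sum_{i\ge 2}\alpha_i^2\bigr)^{1/2}\bigl(\sum_{i\ge 2}\beta_i^2\bigr)^{1/2}$. By Parseval's identity $\sum_i \alpha_i^2 = \|\mathbf{1}_S\|^2 = s$ and $\sum_i \beta_i^2 = \|\mathbf{1}_T\|^2 = t$, so subtracting the $i=1$ contributions yields $\sum_{i\ge 2}\alpha_i^2 = s - s^2/n = s(1-s/n)$ and $\sum_{i\ge 2}\beta_i^2 = t(1-t/n)$. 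Multiplying these together and taking the square root produces exactly the claimed inequality.

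I do not expect a genuine obstacle: the argument is a clean eigenvalue decomposition. The only points demanding care are the correct identification of the leading eigenpair $(d, \frac{1}{\sqrt{n}}\mathbf{1})$, which is where $d$-regularity enters, and the bookkeeping that isolates the $i=1$ term so that the density term $dst/n$ cancels from $e(S,T)$ and the Parseval sums collapse to $s(1-s/n)$ and $t(1-t/n)$.
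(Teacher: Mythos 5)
Your proposal is correct, and it is the standard spectral argument: expand the characteristic vectors in an orthonormal eigenbasis, peel off the Perron term $\frac{dst}{n}$, bound the remaining eigenvalues by $\lambda_2^*$, and finish with Cauchy--Schwarz and Parseval. Note, however, that the paper does not prove this lemma at all --- it quotes it as a known result of Alon and Chung \cite{expmix} (see also \cite{Haemers}) --- so there is no internal proof to compare against; your write-up is essentially the proof found in those references. The only point worth making explicit in a final version is the counting convention hidden in the identity $e(S,T)=\mathbf{1}_S^{\top}A\,\mathbf{1}_T$: this quantity counts ordered incident pairs, so edges with both endpoints in $S\cap T$ are counted twice, which is the standard convention for this lemma (and is immaterial in the paper's application, where $S$ and $T$ lie in different sides of a bipartition).
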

	
	This statement has a variant as well for regular bipartite graphs. In this case, $\lambda_1=\lambda_2^*=|\lambda_n|$ would hold in the original version, however we can use $\lambda_2$ to replace $\lambda_2^*$ once one assumes that $S$ and $T$ are subsets of different partition classes of the bipartite graph. 
	
	\begin{lemma}[Expander mixing lemma, bipartite version]\label{expandermixingbip}
		Let $S\subset A$ and $T\subset B$ be two subsets of the vertex set of an $r$-regular bipartite graph $\mathcal{G}(A,B)$, of sizes $s$ and
		$t$, respectively. Let $e(S,T)$ be the number of edges $xy$ with $x \in  S$ and $y \in T$.
		Then we have 
		$$\mid e(S,T)-\frac{rst}{n}\mid \leq \lambda_2\sqrt{st(1-\frac{s}{n})(1-\frac{t}{n})}.$$
	\end{lemma}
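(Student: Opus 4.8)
The plan is to run the standard spectral argument behind the Expander Mixing Lemma (Lemma~\ref{expandermixing}) on the adjacency matrix of $\mathcal{G}(A,B)$, but to arrange the bookkeeping so that the bipartite symmetry of the spectrum works in our favour. Write $A$ for the adjacency matrix of $\mathcal{G}$ and let $\mathbf{1}_S,\mathbf{1}_T$ be the characteristic vectors of $S$ and $T$, so that $e(S,T)=\mathbf{1}_S^{\top}A\,\mathbf{1}_T$. Counting edges forces $|A|=|B|=:n$, so $\mathcal{G}$ has $2n$ vertices; fix an orthonormal eigenbasis $v_1,\dots,v_{2n}$ with eigenvalues $\lambda_1\ge\cdots\ge\lambda_{2n}$ and expand $\mathbf{1}_S=\sum_i\alpha_i v_i$, $\mathbf{1}_T=\sum_i\beta_i v_i$, giving $e(S,T)=\sum_i\lambda_i\alpha_i\beta_i$. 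Here $n$ denotes the common size of the two classes, matching the density term $\tfrac{rst}{n}$ in the statement.

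First I would isolate the two extreme eigenvectors. For an $r$-regular bipartite graph the normalised all-ones vector $v_1=\tfrac{1}{\sqrt{2n}}\mathbf{1}$ is the Perron eigenvector with $\lambda_1=r$, while $v_{2n}=\tfrac{1}{\sqrt{2n}}(\mathbf{1}_A-\mathbf{1}_B)$ is an eigenvector with $\lambda_{2n}=-r$. Since $S\subset A$ and $T\subset B$ lie in opposite classes, one computes $\alpha_1=\tfrac{s}{\sqrt{2n}}$, $\beta_1=\tfrac{t}{\sqrt{2n}}$ and $\alpha_{2n}=\tfrac{s}{\sqrt{2n}}$, $\beta_{2n}=-\tfrac{t}{\sqrt{2n}}$. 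The key observation is that this sign pattern makes the $v_1$- and $v_{2n}$-contributions \emph{equal}, each being $\tfrac{rst}{2n}$, so together they reproduce exactly the expected term $\tfrac{rst}{n}$. In other words, the large-modulus eigenvalue $-r$ is absorbed into the main term rather than the error, which is precisely why $\lambda_2$, and not $\lambda_2^{*}=\max\{|\lambda_2|,|\lambda_{2n}|\}=r$, will govern the deviation.

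It then remains to bound the middle terms. I would estimate $\bigl|\sum_{i=2}^{2n-1}\lambda_i\alpha_i\beta_i\bigr|\le\lambda_2\sum_{i=2}^{2n-1}|\alpha_i\beta_i|$, using that $\lambda_2=\max_{2\le i\le 2n-1}|\lambda_i|$ because the spectrum is symmetric about $0$; Cauchy--Schwarz then gives $\sum_{i=2}^{2n-1}|\alpha_i\beta_i|\le\bigl(\sum_{i=2}^{2n-1}\alpha_i^2\bigr)^{1/2}\bigl(\sum_{i=2}^{2n-1}\beta_i^2\bigr)^{1/2}$. Finally, Parseval's identity combined with the values of $\alpha_1,\alpha_{2n},\beta_1,\beta_{2n}$ above yields $\sum_{i=2}^{2n-1}\alpha_i^2=s-\tfrac{s^2}{n}=s(1-\tfrac{s}{n})$ and likewise $\sum_{i=2}^{2n-1}\beta_i^2=t(1-\tfrac{t}{n})$, which assembles into the claimed bound. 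The only genuine subtlety --- the ``hard part'' --- is the sign bookkeeping for the bottom eigenvector and the verification that stripping off \emph{both} extreme projections (not merely the Perron one) leaves exactly the radicand $st(1-\tfrac{s}{n})(1-\tfrac{t}{n})$ with coefficient precisely $\lambda_2$. A slicker but equivalent route replaces the eigendecomposition of $A$ by the singular value decomposition of the biadjacency matrix, whose top singular value $r$ produces the main term directly and whose second singular value equals $\lambda_2$, thereby sidestepping the $\pm r$ cancellation issue altogether.
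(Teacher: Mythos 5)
Your argument is correct and complete. Note first that the paper does not actually prove this lemma: it is quoted from Haemers and from Lund--Saraf, so there is no ``paper proof'' to compare against; your write-up supplies a proof the paper omits. The computation checks out: with $|A|=|B|=n$ (forced by $r$-regularity, and consistent with the paper's later use of $n=q^2+q+1$ for $\PG(2,q)$), the projections onto $\tfrac{1}{\sqrt{2n}}\mathbf{1}$ and $\tfrac{1}{\sqrt{2n}}(\mathbf{1}_A-\mathbf{1}_B)$ contribute $\tfrac{rst}{2n}$ each, recovering the main term $\tfrac{rst}{n}$, and Parseval leaves exactly $s-\tfrac{s^2}{n}$ and $t-\tfrac{t^2}{n}$ for the Cauchy--Schwarz step; the identification $\max_{2\le i\le 2n-1}|\lambda_i|=\lambda_2$ is justified by the spectral symmetry $\lambda_i=-\lambda_{2n+1-i}$ of bipartite graphs, which forces $\lambda_2\ge 0$. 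The only point worth making explicit is the choice of eigenbasis when $r$ has multiplicity greater than one (disconnected graphs): one should say that the orthonormal basis is \emph{chosen} to contain the two specified vectors, after which the remaining $v_i$ are automatically orthogonal to both; this does not affect correctness, since in that case $\lambda_2=r$ and the bound is merely weak. Your closing remark about the singular value decomposition of the biadjacency matrix is indeed the cleaner route and is essentially how Haemers derives the result via interlacing, so both paths are legitimate.
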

	The bipartite version appeared first in the work of Haemers \cite{interlacing} in connection with the incidence structure of block designs. Here we used the form from  \cite{Lund}.

	We will use the following folklore statement  (c.f. \cite{Hoff}).
	
	\begin{lemma}[Eigenvalues of the point-line incidence matrix]\label{eigen} 
		Let $M$ denote the incidence matrix of a projective plane $\Pi_q$. 
		Then the eigenvalues of $M$ are $q+1$, and $\sqrt{q}$, as $MM^T=qI+J$ where $I$ stands for the unit matrix while $J$ denotes the all-one matrix.
	\end{lemma}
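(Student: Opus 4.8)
The plan is to read everything off from the combinatorial axioms of $\Pi_q$ once we compute the product $MM^T$ entrywise. Index the rows and columns of $M$ by the points, so that $M_{P,\ell}=1$ precisely when $P\in\ell$; recall that $\Pi_q$ has $n:=q^2+q+1$ points and equally many lines, that every point lies on $q+1$ lines, and that any two distinct points are joined by a unique line. First I would observe that the $(P,P)$-entry of $MM^T$ equals $\sum_{\ell}M_{P,\ell}^2$, the number of lines through $P$, which is $q+1$; while for $P\neq P'$ the $(P,P')$-entry $\sum_{\ell}M_{P,\ell}M_{P',\ell}$ counts the lines incident with both $P$ and $P'$, which is exactly $1$. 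Hence $MM^T=(q+1)I+(J-I)=qI+J$, establishing the displayed identity.

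Next I would diagonalise $qI+J$. Since $J$ is the all-ones matrix of order $n$, it has the all-ones vector $\mathbf 1$ as an eigenvector with eigenvalue $n$, and its orthogonal complement $\mathbf 1^{\perp}$ as a $0$-eigenspace of dimension $n-1$. Adding $qI$ merely shifts every eigenvalue by $q$, so the spectrum of $MM^T$ consists of
$$q+n=q+q^2+q+1=(q+1)^2$$
with multiplicity one (attained on $\mathbf 1$), and $q$ with multiplicity $n-1$ (attained on $\mathbf 1^{\perp}$). As $MM^T$ is symmetric and positive semidefinite, the singular values of $M$ are the nonnegative square roots of these, namely $q+1$ and $\sqrt q$, which is the claimed list.

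Finally I would address the only genuinely delicate point: $M$ is not symmetric, so a priori its eigenvalues need not coincide with its singular values. Here I would invoke the self-duality of the plane: the very same count applied to the lines yields $M^{T}M=qI+J$ as well, whence $MM^{T}=M^{T}M$ and $M$ is a \emph{normal} matrix. A normal matrix is unitarily diagonalisable and its singular values are exactly the moduli of its eigenvalues; combined with the fact that each row of $M$ sums to $q+1$, so that $M\mathbf 1=(q+1)\mathbf 1$, this pins down $q+1$ as a true eigenvalue and forces the remaining $n-1$ eigenvalues to have modulus $\sqrt q$. Equivalently, and in the form actually exploited later through the expander mixing lemma, passing to the bipartite adjacency matrix $\left(\begin{smallmatrix}0&M\\ M^{T}&0\end{smallmatrix}\right)$ of the incidence graph converts the singular values into honest eigenvalues $\pm(q+1)$ and $\pm\sqrt q$, so no subtlety survives in the application.
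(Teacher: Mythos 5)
Your proposal is correct and follows exactly the route the paper indicates: the lemma is stated as folklore with the identity $MM^T=qI+J$ doing all the work, and you supply the standard details (entrywise computation from the plane axioms, diagonalisation of $qI+J$, and passage to singular values). Your extra care about $M$ not being symmetric is well placed --- a normal real matrix can have non-real eigenvalues of modulus $\sqrt q$, so strictly speaking it is the singular values of $M$, equivalently the eigenvalues $\pm(q+1),\pm\sqrt q$ of the bipartite adjacency matrix, that are used in Lemma~\ref{expandermixingbip}, exactly as you observe in your final paragraph.
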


	There exists a partition of $\PG(n, q^h)$ into subgeometries isomorphic to $\PG(n, q)$
	if and only if $gcd(n+1, h) = 1$ (see e.g., Hirschfeld \cite{Hirsch}). In particular for $h = 2$,
	$\PG(n, q^2)$ can be partitioned into subgeometries $\PG(n, q)$ precisely when $n$ is even, and such subgeometries are called
	Baer subgeometries. 
	For the planar case $n=2$,  Bruck proved this Baer subplane decomposition result using the Singer cycle \cite{Bruck}.  
	
	\begin{lemma}\label{baer}
		If $g$ generates the (cyclic) Singer group  $\langle g \rangle$ 
		acting regularly on the points and lines of $\PG(2,q^2)$ then the point orbits of $\langle g^{q^2-q+1} \rangle$ form a partition of disjoint Baer subplanes, each isomorphic to $\PG(2,q)$.
	\end{lemma}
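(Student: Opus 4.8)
The plan is to realise the Singer cycle concretely through the field towers $\F_q \subset \F_{q^2} \subset \F_{q^6}$ and $\F_q\subset\F_{q^3}\subset\F_{q^6}$, and then to identify the orbits of $\langle g^{q^2-q+1}\rangle$ with cosets of a subfield. Concretely, I would view the points of $\PG(2,q^2)$ as the cosets $x\F_{q^2}^*$ with $x\in\F_{q^6}^*$, so that the Singer group is generated by $g\colon x\F_{q^2}^*\mapsto \omega x\F_{q^2}^*$ for a fixed primitive element $\omega$ of $\F_{q^6}$. The induced action is regular on the $\frac{q^6-1}{q^2-1}=q^4+q^2+1$ points, hence $g$ has order $N:=q^4+q^2+1$.

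The arithmetic heart of the argument is the factorisation $N=(q^2+q+1)(q^2-q+1)$. Since $q^2-q+1\mid N$, the subgroup $H:=\langle g^{q^2-q+1}\rangle$ has order $q^2+q+1$, and as the full Singer group acts freely so does $H$; thus every $H$-orbit has size $q^2+q+1$ and there are exactly $N/(q^2+q+1)=q^2-q+1$ of them, partitioning the point set. It remains to show each orbit is a Baer subplane. I would first check that $H$ coincides with the image of the multiplicative group $\F_{q^3}^*$ under the quotient $\F_{q^6}^*\to\F_{q^6}^*/\F_{q^2}^*$: indeed $\F_{q^3}^*=\langle\omega^{q^3+1}\rangle$, and the identity $\gcd(q^3+1,N)=q^2-q+1$ (using $q^3+1=(q+1)(q^2-q+1)$ together with $\gcd(q+1,q^2+q+1)=1$, since $q^2+q+1\equiv 1 \pmod{q+1}$) forces $\langle g^{q^3+1}\rangle=\langle g^{q^2-q+1}\rangle=H$.

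With this identification the base orbit of $H$ is precisely the image of $\F_{q^3}^*$, i.e. the set of points $x\F_{q^2}^*$ with $x\in\F_{q^3}^*$. These are exactly the $q^2+q+1$ points whose representing $1$-dimensional $\F_{q^2}$-subspace meets the $\F_q$-subspace $\F_{q^3}$ nontrivially; this is the classical subfield (Baer) subplane $\PG(2,q)$ sitting inside $\PG(2,q^2)$, and I would invoke the standard fact that the subfield points together with the subfield lines form a projective plane isomorphic to $\PG(2,q)$. Finally, every other orbit is the image of a coset $\omega^i\F_{q^3}^*$, which is just the base Baer subplane moved by the collineation $g^i$; since collineations carry subplanes to isomorphic subplanes, each of the $q^2-q+1$ orbits is a Baer subplane isomorphic to $\PG(2,q)$.

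The routine part is the number-theoretic bookkeeping, namely the factorisation of $N$ and the two gcd computations. The step I expect to require the most care is the recognition that the image of $\F_{q^3}^*$ is genuinely a subplane and not merely a point set of the right cardinality, i.e. that it is closed under the incidence geometry inherited from $\PG(2,q^2)$ and carries the structure of $\PG(2,q)$. This is precisely where the subfield-subplane theory enters, and once it is in place the regularity of the Singer group propagates the conclusion from the base orbit to all $q^2-q+1$ orbits at once.
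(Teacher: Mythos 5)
Your argument is correct: the factorisation $q^4+q^2+1=(q^2+q+1)(q^2-q+1)$, the identification $\langle g^{q^3+1}\rangle=\langle g^{q^2-q+1}\rangle$ via $\gcd(q^3+1,\,q^4+q^2+1)=q^2-q+1$, and the recognition of the base orbit as the subfield subplane determined by $\F_{q^3}\subset\F_{q^6}$ together constitute the standard proof of Bruck's partition theorem. The paper does not prove this lemma itself but cites Bruck, and your write-up is essentially that classical argument, so there is nothing to correct beyond noting that the ``standard fact'' you invoke (that the points meeting $\F_{q^3}$ nontrivially, with the induced lines, form a $\PG(2,q)$) is the one genuinely nontrivial ingredient and deserves the short dimension-count you allude to.
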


	\section{Maximum possible intimacy in projective planes}
	
	We start with the proof of Theorem \ref{mainsquare} which determines the intimacy for the incidence graph of $\PG(2,q)$ when $q$ is a square.
	\begin{proof}[Proof of Theorem \ref{mainsquare}]
		First we show that the intimacy can indeed attain $\lfloor(\sqrt{q}-1)/2\rfloor$.\\ Consider the decomposition of $\PG(2,q)$ to disjoint Baer subplanes $\Pi_{\sqrt{q}}^{(1)}, \ldots, \Pi_{\sqrt{q}}^{(q-\sqrt{q}+1)}.$  Each plane consist of $q+\sqrt{q}+1$ Baer sublines, which are incident with $\sqrt{q}+1$ points of their own subplanes, and every point in $\PG(2,q) \setminus \Pi_{\sqrt{q}}^{(j)}$ is incident with exactly one subline of $\Pi_{\sqrt{q}}^{(j)}$, for all $j$. Dually, 
		every line not in the subplane $\Pi_{\sqrt{q}}^{(j)}$  intersects the point set of $\Pi_{\sqrt{q}}^{(j)}$ in exactly one point. \\
		Consider the following construction. Let $\cP\cup \cL$ denote the union of point and line set of the plane $\PG(2,q)$ while $\cP_j\cup \cL_j$ stands for the union of point and line set of the Baer subplane $\Pi_{\sqrt{q}}^{(j)}$. Take the incidence graph $\mathcal{G}(\Pi_q)$ for $\Pi(q)=\PG(2,q)$.
		Let $A$ be defined as $$A:=\bigcup_{j=1}^{m} \cP_j\cup \cL_j$$ where $m:=\left\lfloor \frac{q-\sqrt{q}+1}{2}\right\rfloor$. Then the above reasoning on the intersection patterns of Bare sublines implies that ${\mathcal{G} |_A}$ is a regular graph of valency $\sqrt{q}+1+(m-1)$ and ${\mathcal{G} |_B}$ is a regular graph of valency $\sqrt{q}+1+m$ for $B=V(\mathcal{G})\setminus A$. Hence $\inti(\mathcal{G})\geq \frac{\sqrt{q}-1}{2}$ when $q$ is odd and $\inti(\mathcal{G})\geq \lfloor \frac{\sqrt{q}-1}{2}\rfloor$ when $q$ is even, taking into account that the valency of  $\mathcal{G}$ is $q+1$.\\
		
		Next, consider a pair of subsets $A$ and $B$ of $V(\mathcal{G})$ such that the partition $A\cup B$ is $m$-internal with $m= \inti(\mathcal{G})$. We may assume that  $|A\cap \cL|\leq \frac{q^2+q}{2}$ since $|A\cap \cL|+|B\cap \cL|=q^2+q+1$. We may also assume that $|A\cap \cL|\leq   |A\cap \cP|$. Let us introduce the notations $\AP:= A \cap \cP$ and $\AL:=A\cap \cL$. 
		By applying the Expander Mixing Lemma \ref{expandermixingbip} for $\AP$ and $\AL$,  we get that
		
		$$\mid e(\AP,\AL)-\frac{(q+1)|\AP||\AL|}{q^2+q+1}\mid \leq \lambda_2\sqrt{|\AP||\AL|(1-\frac{|\AP|}{q^2+q+1})(1-\frac{|\AL|}{q^2+q+1})}.$$
		
		Here we have $\lambda_2=\sqrt{q}$ in view of Lemma \ref{eigen}. 
		
		This in turn implies 
		\begin{equation}\label{eq:1}
			e(\AP,\AL)\leq \frac{(q+1)|\AP||\AL|}{q^2+q+1} + \sqrt{q|\AP||\AL|(1-\frac{|\AP|}{q^2+q+1})(1-\frac{|\AL|}{q^2+q+1})}.  
		\end{equation} 
		On the other hand, we have a lower bounds on $e(\AP,\AL)$  by the assumption on the intimacy, from which we obtain
		\begin{equation}\label{eq:0}
			e(\AP,\AL)\geq |\AP|\left(\frac{q+1}{2}+\inti(\mathcal{G})\right) 
		\end{equation}
		
		Suppose to the contrary that $\inti(\mathcal{G})\geq \sqrt{q}/2$.
		We distinguish between two cases.\\
		\textbf{Case 1.} $0<|\AP|<\frac{q^2+q+1}{2}$.\\
		Then the right hand side of Inequality  \ref{eq:1} can be bounded above by 
		
		\begin{equation}\label{eq:2}
			\frac{(q+1)|\AP|^2}{q^2+q+1} + \sqrt{q}|\AP|(1-\frac{|\AP|}{q^2+q+1}),  
		\end{equation} since it is a monotone increasing function in $|\AL|$ in the interval $[0, |\AP|]$. However, 
		
		\begin{equation}\label{eq:3}
			\frac{(q+1)|\AP|^2}{q^2+q+1} + \sqrt{q}|\AP|(1-\frac{|\AP|}{q^2+q+1})<   |\AP|\left(\frac{q+1}{2}+\frac{\sqrt{q}}{2}\right),
		\end{equation} which is a contradiction in view of Inequality \ref{eq:0}.
		Indeed, Inequality \ref{eq:3} is equivalent to
		\begin{equation}\label{eq:4}
			\frac{(q+1-\sqrt{q})|\AP|}{q^2+q+1} + \sqrt{q}<   \frac{q+\sqrt{q}+1}{2} \Longleftrightarrow |\AP|<\frac{q^2+q+1}{2},
		\end{equation} which holds according to our assumption.

		\textbf{Case 2.} $|\AP|>\frac{q^2+q+1}{2}$.\\
		
		Then the right hand side of Inequality  \ref{eq:1} can be bounded above by 
		
		\begin{equation}\label{eq:5}
			\frac{(q+1)|\AP|}{2} + \sqrt{q|\AP|\frac{q^2+q+1}{2}\left(\frac{q^2+q+1-|\AP|}{q^2+q+1}\right)\frac{1}{2}}.  
		\end{equation}  since it is a monotone increasing function in $|\AL|$ in the interval $[0,\frac{q^2+q+1}{2} ]$. However,

		\begin{equation}\label{eq:6}
			\sqrt{q|\AP|\frac{q^2+q+1}{2}\left(\frac{q^2+q+1-|\AP|}{q^2+q+1}\right)\frac{1}{2}}<\frac{\sqrt{q}}{2}|\AP|,   
		\end{equation} since

		\begin{equation}\label{eq:7}
			\sqrt{q\frac{1}{2}\left({q^2+q+1-|\AP|}\right)\frac{1}{2}}<\frac{\sqrt{q}}{2}\sqrt{|\AP|},   
		\end{equation} as we are in Case 2. This is again a contradiction with Inequality (\ref{eq:0}).
	\end{proof}
	
	\section{Internal partition in all desarguesian projective planes, geometric structure}
	
	\subsection{Odd order, several different constructions}
	We begin with a combinatorial construction which works for every projective plane of odd order.
	
	\begin{constr}\label{combin} Let $\Pi=\Pi_q$ be a projective plane of order $q$. Take the point set $\cP_1$ of $\frac{q+1}{2}$ concurrent lines through a fixed point $P$. Choose a line $\ell$ which is not incident to $P$.
		Take the line set $\cL_1$ of lines incident with those $\frac{q+1}{2}$ collinear points on $\ell$ which are in $\cP_1$. Note that the size of the intersection of $\ell$ and $\cP_1$ is precisely $\frac{q+1}{2}$.  Then $\cP_1\cup \cL_1$ and its complement $\cP_2\cup \cL_2$ form an internal partition in the incidence graph  of $\Pi_q$.
	\end{constr}
	
	\begin{proof} It is easy to see that every point is incident with $\frac{q+1}{2}$ lines of $\cL$, apart from the point in $\ell\cap \cP_1$. Also,  every line is incident with $\frac{q+1}{2}$ point of $\cP$, apart from the lines in $\cL_1$ which are incident with $P$. This result then in turn follows.
	\end{proof}
	
	\begin{remark} A slight variant of Construction \ref{combin} also satisfies the conditions, when we might drop $P$ from $\cP_1$  and we might drop  $\ell$ from $\cL_1$ as well, independently from the choice on $P$. 
	\end{remark}
	
	Observe that in the construction above, $\frac{q+1}{2}$ lines and equally many points have full degree $q+1$ within the internal partition classes $A=\cP_1\cup \cL_1$  and $B=\cP_2\cup \cL_2$ of the incidence graph.
	We show an algebraic construction which shows a different structure, provided that the plane is desarguesian. Let $S$ denote the set of square elements in $\F_q$, i.e., $S=\{a^2 \mid a\in \F_q^{\times} \}$.
	
	\begin{constr}\label{1mod4} 
		Let     $A=\cP_1\cup \cL_1$ be constructed as follows.\\ Let $U_\mathcal{P}:=\{(0:1:0), (1:0:0), (0:0:1) \}$ and  let $U_\mathcal{L}:=\{[0:1:0], [1:0:0], [0,:0:1] \}$
		$$\cP_1:=\{(x:y:1): y/x\in S \}\cup\{(0:y:1): y\in S \}\cup \{(x:0:1): x\not \in S \} \cup \{(x:1:0): x\in S \} \cup U_\mathcal{P} $$
		
		likewise,
		$$\cL_1:=\{[x:y:1]: y/x\in S \}\cup\{[0:y:1]: y\in S \}\cup \{[x:0:1]: x\not \in S \} \cup \{[x:1:0]: x\in S \} \cup U_\mathcal{L}. $$\qed
	\end{constr}
	
	If $q\equiv 3\pmod 4$, the  construction is slightly different. Note that in this case, $-1$ is not a square element in $\F_q$.
	\begin{constr}\label{3mod4}
		Let     $A=\cP_1\cup \cL_1$ be constructed as follows.\\Let $U_\mathcal{P}:=\{(0:1:0), (1:0:0), (0:0:1) \}$ and  let $U_\mathcal{L}:=\{[0:1:0], [1:0:0], [0,:0:1] \}$
		$$\cP_1:=\{(x:y:1): y/x\in S \}\cup\{(0:y:1): y\in \F_q^{\times} \}\cup  \{(x:1:0): x\not\in S \} \cup U_\mathcal{P} $$
		
		likewise,
		$$\cL_1:=\{[x:y:1]: y/x\in S \}\cup\{[0:y:1]:  y \in\F_q^{\times}  \} \cup \{[x:1:0]: x\not\in S \} \cup U_\mathcal{L}. $$\qed
	\end{constr}
	
	\begin{remark}
		Construction \ref{1mod4}  contains a point, namely $(0,0,1)$, for which $\frac{q-1}{2}$ complete lines are contained in  $\cP_1$, but not more. Construction \ref{3mod4}  contains a point, namely $(0,0,1)$, for which $\frac{q-1}{2}$ almost complete lines are contained in  $\cP_1$, i.e., complete lines in the affine part, without the ideal points. The dual statement holds for the lines. This structural distinction shows that these algebraic constructions are different from the combinatorial Construction \ref{combin}.  
	\end{remark}
	
	\begin{prop}
		Constructions \ref{1mod4} and \ref{3mod4}, together with their complements with respect to $\cP\cup \cL$ give an internal partition of the incidence graph $\mathcal{G}(\PG(2,q))$.  
	\end{prop}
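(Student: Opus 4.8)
The incidence graph $\mathcal{G}=\mathcal{G}(\PG(2,q))$ is $(q+1)$-regular, so a partition into $A=\cP_1\cup\cL_1$ and $B=\cP_2\cup\cL_2$ is internal precisely when every vertex has at least $(q+1)/2$ of its neighbours in its own class. The plan is to first halve the work with a duality argument and then to carry out an incidence count organised by the square/non-square type of the coordinates.

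First I would invoke the canonical polarity $\sigma$ sending a point $(a:b:c)$ to the line $[a:b:c]$ and a line to the point with the same triple. Since incidence is governed by the symmetric form $ax+by+cz$, the map $\sigma$ preserves incidences, and it carries the pencil of lines through a point $P$ bijectively onto the range of points on $\sigma(P)$. Because $\cP_1$ and $\cL_1$ are defined by \emph{identical} conditions, $\sigma$ interchanges $\cP_1\leftrightarrow\cL_1$ and $\cP_2\leftrightarrow\cL_2$. Consequently the four internality requirements collapse to two statements about points only: for every point $P$ one needs $d_{\cL_1}(P)\ge (q+1)/2$ if $P\in\cP_1$, and $d_{\cL_1}(P)\le (q+1)/2$ (equivalently $d_{\cL_2}(P)\ge (q+1)/2$) if $P\in\cP_2$. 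Thus it suffices to compute, for each point, the number of lines of $\cL_1$ incident with it.

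Next I would cut down the number of points to be inspected using the diagonal collineations $(x:y:z)\mapsto(\alpha x:\beta y:z)$ with $\alpha,\beta\in S$. Because $S$ is a multiplicative subgroup of index two, each such map permutes every one of the five sets defining $\cP_1$ (and $\cL_1$) within itself, so the group stabilises the partition. Its orbits on the points are few: four orbits of generic affine points $(a:b:1)$ indexed by the square/non-square type of $a$ and of $b$, two orbits on each coordinate axis, two orbits among the points at infinity $(x:1:0)$ with $x\neq 0$, and the fixed points $(0:0:1),(0:1:0),(1:0:0)$. Hence it is enough to verify the count on one representative of each orbit.

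The heart of the computation is the generic affine case. Writing a line $[x:y:1]$ of the first family through $(a:b:1)$ as $y=tx$ with $t\in S$ and solving $x(a+bt)=-1$ shows that this family contributes $|S|$ lines, less a correction according to whether the forbidden value $-a/b$ lies in $S$; the remaining four families (the two axis families, the infinity family, and $U_{\mathcal L}$) each contribute a $0/1$ term governed by the square/non-square status of $a$, $b$ and $-1$. It is here that the hypothesis $-1\in S$ for Construction \ref{1mod4}, respectively $-1\notin S$ for Construction \ref{3mod4}, enters decisively, and it is the reason the two constructions must differ precisely on the coordinate axes. The main obstacle I anticipate is exactly the bookkeeping of these corrections: one must check that, for each of the four square-class combinations of $a$ and $b$, the axis- and infinity-contributions combine so that $d_{\cL_1}(a:b:1)$ falls on the side of $(q+1)/2$ dictated by whether $ab$ is a square, and the asymmetric treatment of the two axes in the construction is what has to make this balance work. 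Finally the special points — those on the two axes, at infinity, the fundamental point $(0:0:1)$, and the three points of $U_{\mathcal P}$ — must be counted individually, since the diagonal group does not act transitively on them; these boundary counts, where whole pencils of lines tend to fall into one class, are where an off-by-one in the accounting of the degenerate triples $[0:0:1]$, $[0:1:0]$, $[1:0:0]$ would be easiest to commit.
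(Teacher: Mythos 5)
Your plan is correct and follows essentially the same route as the paper: reduce to the point conditions via the polarity $(a:b:c)\mapsto[a:b:c]$ (the paper's ``by duality''), then count incident lines of $\cL_1$ for each coordinate type by solving the linear incidence equation subject to square/non-square constraints, with the quadratic character of $-1$ distinguishing the two constructions. Your additional use of the diagonal collineation group to cut the verification down to one representative per orbit is a tidy organizational refinement, but like the paper you stop short of executing the final bookkeeping, so the two arguments sit at the same level of detail.
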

	
	\begin{proof}
		The statement follows simply for the points of the lines of the set $U_\mathcal{L}$, taking into account that $-1$ is a square element and a non-square element, respectively, in cases $q\equiv 1 \pmod 4$ and $q\equiv -1 \pmod 4$. By duality it is enough to check that each point of $\cP_1$ is incident with at least  $\frac{q+1}{2}$ lines of $\cL_1$ and   each point of $\cP\setminus\cP_1$ is incident with at least  $\frac{q+1}{2}$ lines of $\cL\setminus \cL_1$. If one of the homogeneous coordinates of a point $P(x:y:z)$ is $0$, this can be verified easily. Suppose for instance that $z=0$, $y\neq 0$, i.e., $P=(x:1:0)$. Then the number of incidences with a line $[a,b,c]$ transforms to the number of solutions  of $\{ax+b=0, c=1 \mbox{ \ or \ } ax+b=0, c=0 \}\Longleftrightarrow \{ \frac{a}{b}x=-1, c=1 \mbox{ \ or \ } \frac{a}{1}x=-1, c=0\mbox{ \ or \ } b=a=0, c=1\}$, where have  information on whether $a,b\in S$ holds or not holds. The general case when $xyz\neq 0$ is similar, when  the number of incidences with a line $[a:b:c]$ transforms to the number of solutions  of $ax+by+cz=0 $, that is, 
		$\{ ax+by=-1, c=1 \mbox{ \ or \ } ax+y=0, c=0  \mbox{ \ or \ } ax+z=0, b=0 \mbox{ \ or \ }  by+z=0, a=0\}$ for homogeneous triples $[a,b,c]$, where have on information on whether $a,b\in S$ holds or not holds. Hence simple case analysis completes the proof.
	\end{proof}
	
	\begin{remark} It is easy to check that erasing    $U_\mathcal{P}\cup U_\mathcal{L}$ from either Construction \ref{1mod4} or \ref{3mod4} results a good construction as well.
	\end{remark}
	
	For the next construction, we introduce the arcs in projective planes. For more details on these objects, we refer to the excellent books of Hirschfeld \cite{Hirsch}, and of Kiss and Szőnyi \cite{KSz}. We use the notations of \cite{KSz}.
	
	\begin{defi}[Arcs] A
		set of points in a plane is called an \textit{arc} if no three of its
		points are collinear. An arc is called a $k$-arc, if it contains exactly $k$ points. \\     If the order of the plane is $q$,  then a  $q+1$-arc is called an \textit{oval}, and 
		a $(q+2)$-arc is called a \textit{hyperoval}.
	\end{defi}

	\begin{defi} Let $K$ be a point set and $\ell$ be a line in a projective plane. Then  $\ell$ is called \vspace{-0.4cm}
		\begin{itemize}
			\item  a \textit{$t$-secant} to $K$ if $|K \cap \ell| = t$ (for $t>1$);\vspace{-0.2cm}
			\item a \textit{tangent} to $K$ if $|K \cap \ell| = 1$;\vspace{-0.2cm}
			\item a \textit{skew line} to $K$ if $|K \cap \ell| =0$.
		\end{itemize} 
	\end{defi}\vspace{-0.2cm}
	Note that by definition, there exists a unique tangent on every point of an oval. Simple double counting shows that if $q$ is odd, then once a point is not on a given $q+1$-arc (oval) $K$, and  lies on a tangent, then it lies on exactly two tangents to $K$.
	We also mention that due to the well-known theorem of Segre, if $q$ is odd then  each oval is a conic in a desarguesian plane $\PG(2,q)$.

	\begin{defi}
		Let $q$ be odd and let $\mathcal{O}$ be an oval in the projective plane $\Pi_q$.  A point $\mathcal{O}$ is called \textit{interior or exterior}  point with respect to $\mathcal{O}$ according
		to whether it lies on 0 or 2 tangents to $\mathcal{O}$.
	\end{defi}
	
	We recall a simple lemma from \cite[Lemma 6.14]{KSz}.
	\begin{lemma}\label{borrowed}
		Let $q$ be odd and $\mathcal{O}$ be an oval in a projective plane $\Pi_q$. Then there are exactly  $\frac{(q+1)q}{2}$
		exterior points and   $\frac{(q-1)q}{2}$ interior points with respect to $\mathcal{O}$.
		If the line $\ell$ is not a tangent to $\mathcal{O}$, then exactly half of the points of $\ell\setminus\mathcal{O}$
		are exterior and half of them are interior points with respect to $\mathcal{O}$.
	\end{lemma}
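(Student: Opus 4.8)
The plan is to prove both assertions by double counting incidences between the points off $\mathcal{O}$ and the tangent lines of $\mathcal{O}$, using only the arc property together with the already-established fact that, for $q$ odd, every off-oval point lies on exactly $0$ or $2$ tangents (hence is either interior or exterior).

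For the global count I would first record the basic parameters. Since $\mathcal{O}$ is a $(q+1)$-arc, there are $q^2+q+1-(q+1)=q^2$ points off the oval, and by the definition preceding the lemma each such point is either interior (on $0$ tangents) or exterior (on $2$ tangents). There are exactly $q+1$ tangents, one through each point of $\mathcal{O}$, and each tangent contains exactly $q$ points off $\mathcal{O}$ (all its points except the point of tangency). I then count the pairs $(P,t)$ with $P\notin\mathcal{O}$, $t$ a tangent, and $P\in t$, in two ways: summing over tangents gives $(q+1)q$, while summing over points gives $2E$, where $E$ denotes the number of exterior points (each exterior point lies on $2$ tangents, each interior point on none). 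Equating the two yields $E=\tfrac{q(q+1)}{2}$, and therefore the number of interior points is $q^2-E=\tfrac{q(q-1)}{2}$, as claimed.

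For the second assertion, let $\ell$ be a non-tangent line. Because $\mathcal{O}$ is an arc, $\ell$ meets it either in $2$ points (a secant) or in $0$ points (an external line). The key observation is that each of the $q+1$ tangents is distinct from $\ell$ and hence meets $\ell$ in exactly one point, so the total number of incidences (tangent, intersection point on $\ell$) equals $q+1$. Counting these from the side of $\ell$: a point $P\in\ell\cap\mathcal{O}$ lies on exactly one tangent, namely the tangent at $P$ — any further tangent through $P$ would be a secant, contradicting that a tangent meets $\mathcal{O}$ only once — and so contributes $1$; an exterior point of $\ell$ contributes $2$ and an interior point contributes $0$. Writing $E_\ell$ for the exterior points on $\ell$, this gives $|\ell\cap\mathcal{O}|+2E_\ell=q+1$. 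In the secant case $2+2E_\ell=q+1$, so $E_\ell=\tfrac{q-1}{2}$, and since $\ell$ has $q-1$ off-oval points the interior count is $I_\ell=\tfrac{q-1}{2}$ as well; in the external case $2E_\ell=q+1$, so $E_\ell=I_\ell=\tfrac{q+1}{2}$. In both cases exactly half of $\ell\setminus\mathcal{O}$ is exterior and half interior.

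Both arguments are routine double counts, and there is no serious obstacle; the only points demanding a little care are that each tangent is genuinely distinct from $\ell$ (so it meets $\ell$ in a single point), which is exactly where the hypothesis that $\ell$ is not a tangent is used, and that an oval point carries no tangent besides its own, which is the arc property. I would also remark that the oddness of $q$ is indispensable: it is what guarantees the clean $0$-or-$2$ dichotomy of tangents through an off-oval point, whereas for $q$ even all tangents pass through the nucleus and the interior/exterior classification collapses.
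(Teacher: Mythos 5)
Your double-counting argument is correct and complete; note that the paper itself does not prove this lemma but simply cites it as \cite[Lemma 6.14]{KSz}, and your argument is precisely the standard one found there (count point--tangent incidences globally for the first claim, and along the fixed non-tangent line $\ell$ for the second, using the $0$-or-$2$ tangent dichotomy that the paper records just before the definition of interior and exterior points). Both the global identity $2E=q(q+1)$ and the local identity $|\ell\cap\mathcal{O}|+2E_\ell=q+1$ are handled correctly in the secant and external-line cases, so there is nothing to add.
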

	
	\begin{constr}\label{oval}
		Let  $\cP_1\cup \cL_1$ be defined as follows. 
		
		Let $\cL_1:=\{\mbox{skew lines to $\mathcal{O}$}\}$ and $\cP_1:=\{\mbox{interior points of $\mathcal{O}$}\}$.
		
		Let $\cL_1^*:=\{\mbox{skew lines and tangents to $\mathcal{O}$}\}$ and $\cP_1^*:=\{\mbox{exterior points of $\mathcal{O}$}\}$.
	\end{constr}
	\begin{prop}
		Construction \ref{oval} provides two different internal partitions, both $\cP_1\cup~\cL_1$ and $\cP_1^*\cup \cL_1^*$ together with their complements with respect to the plane form an example.   
	\end{prop}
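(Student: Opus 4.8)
The plan is to reduce both claims to a single incidence census that records, for each of the three point types (oval, interior, exterior) and each of the three line types (secant, tangent, skew), how many lines of a given type pass through a given point, and dually how many points of a given type lie on a given line. Since the incidence graph $\mathcal{G}(\PG(2,q))$ is $(q+1)$-regular, checking that a partition is internal amounts to verifying, for every vertex, that the number of same-class neighbours is at least $\frac{q+1}{2}$; once the census is tabulated this becomes a routine reading of entries.

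First I would assemble the point census. The $q+1$ lines through an off-oval point meet the $q+1$ points of $\mathcal{O}$, each secant absorbing two oval points and each tangent exactly one. Using the definition that an interior point lies on no tangent and an exterior point on exactly two, this forces an interior point to lie on $\frac{q+1}{2}$ secants and $\frac{q+1}{2}$ skew lines, and an exterior point to lie on $2$ tangents, $\frac{q-1}{2}$ secants, and $\frac{q-1}{2}$ skew lines. Each oval point carries its unique tangent together with $q$ secants and no skew line.

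Next I would assemble the line census. For skew and secant lines, Lemma \ref{borrowed} applies directly: a skew line has all $q+1$ of its points off $\mathcal{O}$, split evenly into $\frac{q+1}{2}$ interior and $\frac{q+1}{2}$ exterior, while a secant carries two oval points and splits the remaining $q-1$ points into $\frac{q-1}{2}$ interior and $\frac{q-1}{2}$ exterior. The tangent case is the one genuine subtlety, since Lemma \ref{borrowed} explicitly excludes tangents: here I would argue directly that, because interior points lie on no tangent, every off-oval point of a tangent line must be exterior, so a tangent meets $\mathcal{O}$ once and carries $q$ exterior points. This is consistent with the double count $2\cdot\frac{q(q+1)}{2}=q(q+1)$ of (exterior point, tangent) flags distributed over the $q+1$ tangents.

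Finally I would feed the census into the two partitions. For $A=\cP_1\cup\cL_1$ the interior points and the skew lines each see exactly $\frac{q+1}{2}$ same-class neighbours, while on the complementary side the exterior points and the secants see $2+\frac{q-1}{2}=\frac{q+3}{2}$ same-class neighbours, and the oval points and tangents see all $q+1$. For $A^*=\cP_1^*\cup\cL_1^*$ the exterior points see $2+\frac{q-1}{2}$ tangents and skew lines, tangents see $q$ exterior points, and skew lines see $\frac{q+1}{2}$ exterior points; on the complementary side the interior points see $\frac{q+1}{2}$ secants, the oval points see $q$ secants, and the secants see $2+\frac{q-1}{2}$ oval and interior points. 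Every one of these counts is at least $\frac{q+1}{2}$ for odd $q\ge 3$, so both partitions are internal. The only step demanding care is the tangent-line entry of the census, as it falls outside the scope of Lemma \ref{borrowed}; everything else is bookkeeping driven by that lemma.
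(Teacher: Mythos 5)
Your proof is correct and follows essentially the same route as the paper's: both arguments reduce to the same incidence census between the point types (oval, interior, exterior) and line types (secant, tangent, skew), derived from Lemma \ref{borrowed} and the tangent counts, and then verify the degree condition $\geq \frac{q+1}{2}$ class by class. Your treatment is somewhat more systematic (and makes explicit the tangent-line entry, namely that all $q$ off-oval points of a tangent are exterior, which the paper states without comment), but the underlying computation is identical.
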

	
	\begin{proof}
		It is straightforward that $\cP_1\cup\cL_1$ induces a regular subgraph of valency $\frac{q+1}{2}$, and in addition, its complement forms  an induced subgraph of minimum degree $\frac{q+1}{2}$ in view of \ref{borrowed}. Similarly, each skew line and tangent is incident with at least  $\frac{q+1}{2}$ exterior point while every exterior point is incident with $2$ tangents and $\frac{q-1}{2}$  skew lines. Likewise, $2$-secants are incident with $\frac{q+3}{2}$ interior points or points of $\mathcal{O}$ while interior points are incident with $\frac{q+1}{2}$ $2$-secants via Lemma \ref{borrowed} and the points of  $\mathcal{O}$ are incident with $q$ $2$-secants. This in turn proves the statement. 
	\end{proof}

	\subsection{Even order case, relation to maximal $(k; n)$-arcs}
	
	In a finite projective plane $\Pi_q$ of order $q$, any set $K$ of $k$ points is called a  \textit{$(k; n)$-arc} if $\max \{|\ell \cap K| : \ell\in \mathcal{L}(\Pi_q) \} =n$. For given $q$ and $n$, the cardinality of the point set of the $(k; n)$-arc $K$ can never exceed $(n- 1)(q + 1) + 1$, and
	a $(k; n)$-arc with that number of points will be called  a \textit{maximal $(k; n)$-arc.}
	Equivalently, a maximal $(k; n)$-arc can be defined as a nonempty  point set of the projective plane which meets
	every line in either $n$ points or in zero. While in $\PG(2,q)$ with $q$ odd, no non-trivial maximal arcs exist \cite{BallBlokhuis}, we do have examples in the $q$ even case. From a classical result of Denniston \cite{Denniston}, we know that maximal  {$(k; n)$-arcs} exist in $\PG(2,q)$, $q=2^h$, for every divisor $n$ of $q$.

	\begin{constr}\label{evencase}
		Let $\mathcal{M}$ be a maximal $(\frac{q^2}{2}-\frac{q}{2}; \frac{q}{2})$-arc  of a projective plane $\PG(2,q)$ and $\ell$ one of its $\frac{q}{2}$-secant lines. Let $\cP_1$ be the point set obtained as the symmetric difference of these objects $\mathcal{M}\triangle\ell$. Let $\cL_1$ be the set of lines through the points of $\ell \setminus \mathcal{M}$ which intersect $\mathcal{M}$. 
	\end{constr}
	\begin{prop} Using the notions of Construction \ref{evencase}, 
		$\cP_1\cup \cL_1$ and its complement $\cP_2\cup \cL_2$ form an internal partition in the incidence graph  of $\PG(2,q)$.
	\end{prop}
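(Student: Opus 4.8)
The plan is to verify the internal-partition conditions vertex by vertex, showing that every vertex of the incidence graph has at least $\frac{q}{2}+1$ of its $q+1$ neighbours in its own class; since $q+1$ is odd, $\frac{q}{2}+1$ is the least integer exceeding $\frac{q+1}{2}$, so this both produces an internal partition and yields the strict inequality noted in the introduction. Before the casework I would record the facts forced by the maximality of $\mathcal{M}$: every line is either a $\frac{q}{2}$-secant or skew to $\mathcal{M}$, hence every line through a point of $\mathcal{M}$ is a secant, and a double count (each point of $\mathcal{M}$ lies on a unique line through a fixed external point) shows that through any point $P\notin\mathcal{M}$ there pass exactly $q-1$ secants and $2$ skew lines. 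I would also split the points of the fixed secant $\ell$ into $G:=\ell\cap\mathcal{M}$, of size $\frac{q}{2}$, and $R:=\ell\setminus\mathcal{M}$, of size $\frac{q}{2}+1$.

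Next I would rewrite the construction as an explicit membership dictionary. Unwinding the symmetric difference gives $\cP_1=(\mathcal{M}\setminus\ell)\cup R$ and $\cP_2=G\cup\{\text{points off }\ell\text{ and off }\mathcal{M}\}$, while $\cL_1=\{\ell\}\cup\{\text{secants }m\neq\ell\text{ with }m\cap\ell\in R\}$ and $\cL_2$ consists of all skew lines together with the secants $m\neq\ell$ meeting $\ell$ in a point of $G$. The one geometric observation I would isolate and reuse is that for $P\notin\ell$ the $q+1$ lines through $P$ biject with the points of $\ell$, and any line joining $P$ to a point of $G\subseteq\mathcal{M}$ is automatically a secant.

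With the dictionary in place the proof splits into four point-cases and four line-cases, each a one-line count. For a point of $\mathcal{M}\setminus\ell$ all $q+1$ lines through it are secants and exactly the $\frac{q}{2}+1$ meeting $\ell$ in $R$ lie in $\cL_1$; for a point of $R$ all $q-1$ secants through it (among them $\ell$) lie in $\cL_1$, which needs $q\geq 4$; for a point of $G$ the $q$ secants other than $\ell$ lie in $\cL_2$; and for a point off $\ell$ and off $\mathcal{M}$ the $\frac{q}{2}$ lines to $G$ plus the $2$ skew lines give $\frac{q}{2}+2$ members of $\cL_2$. Dually, $\ell$ contains the $\frac{q}{2}+1$ points of $R$; a secant meeting $\ell$ in $R$ contains its $\frac{q}{2}$ arc-points (all off $\ell$) together with its single $R$-point; a secant meeting $\ell$ in $G$ contains its $\frac{q}{2}+1$ external points together with the arc-point $m\cap\ell$; and a skew line contains its $q$ points off $\ell$. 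In each case the own-class degree is at least $\frac{q}{2}+1$.

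The computations themselves are elementary, so the real work lies purely in the bookkeeping: one must keep, for every vertex, the secant/skew dichotomy aligned with the $R$/$G$ split of $\ell$. The case I expect to be most delicate is the point lying off both $\ell$ and $\mathcal{M}$, where the bound $\frac{q}{2}+2$ rests on two separate facts — that each join to $G$ is a secant (so those lines land in $\cL_2$) and that the two skew lines through such a point necessarily meet $\ell$ inside $R$ rather than $G$ (so they contribute on top of the $\frac{q}{2}$ secants to $G$). Ensuring these two contributions are disjoint, and confirming that no case dips below $\frac{q}{2}+1$, is where the care is needed.
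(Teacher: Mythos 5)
Your proposal is correct and follows essentially the same route as the paper: both rest on the double-counting fact that every point off $\mathcal{M}$ lies on exactly $q-1$ secants and $2$ skew lines, and then verify the degree condition by the same case analysis over point types ($\mathcal{M}\setminus\ell$, $\ell\setminus\mathcal{M}$, $\ell\cap\mathcal{M}$, and points off both) and line types (secants split by where they meet $\ell$, plus skew lines). Your write-up is somewhat more systematic in making the membership dictionary explicit, but the argument is the same.
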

	\begin{proof}
		First observe that basic double counting yields that every point $P\not \in \mathcal{M}$ is on exactly $q-1$ $q/2$-secants of $\mathcal{M}$ and $2$ skew lines to $\mathcal{M}$. Moreover, the cardinality of skew lines to $\mathcal{M}$ is exactly $q+2$. (These form a hyperoval in the dual plane as observed by Cossu.)
		This means that in the construction  $\cP_1\cup \cL_1$, the points $\ell \setminus \mathcal{M}$ will be incident with $q-1$ $(q/2+1)$-secants while the points of $\mathcal{M} \setminus \ell$ are incident with only $(q/2-1)$-secant lines  apart from $\ell$. On the other hand, the points of $\mathcal{M}\setminus \ell$ are incident with exactly $q/2+1$ $(q/2+1)$-secants (which are exactly the lines going through the points of $\ell \setminus \mathcal{M}$). For every point $Q$ outside $\ell \cup \mathcal{M}$, also at most $q/2+1$ $(q/2+1)$-secants would be created, determined by the lines going through the points of $\ell \setminus \mathcal{M}$, but two of them are tangent lines, obtained from skew lines to $\mathcal{M}$. This shows that $\deg_{\cP_i\cup \cL_i}(v)\geq q/2+1$ for all $v\in \cP_i$ ($i\in \{1,2\}$).\\
		It is even simpler to show $\deg_{\cP_i\cup \cL_i}(v)\geq q/2+1$ for all $v\in \cL_i$. Indeed, $\deg_{\cP_1\cup \cL_1}(v)= q/2+1$ for all $v\in \cL_1$ by the definition as we obtained $q/2+1$-secants from non-skew lines by adding  extra point in $\ell$. Lines skew to $\mathcal{M}$ became $1$-secants in the construction, thus $\deg_{\cP_2\cup \cL_2}(v)= q$ for $q+2$ vertices $v\in \cL_2$ in the incidence graph. Finally, lines through points of  $\mathcal{M}\cap \ell$ 
		became $q/2-1$-secants apart from $\ell$ itself, so for these lines, we have  $\deg_{\cP_2\cup \cL_2}(v)= q/2+2$.
	\end{proof}
	
	\section{Final remarks and open problems}
	
	In the previous sections, we showed several constructions with different structures that provided internal partitions for  the incidence graph of projective planes. If the prime power order has even exponent, much stronger conditions also hold for a suitable partition, which was grabbed by the concept of intimacy. It would be interesting to see whether the observation of Diwan for the order $q=3$ case, $\inti(\mathcal{G}(PG(2,3))=0$, relied essentially on the fact that the order was small, or similar statements can be made for larger prime numbers as well.
	
	\begin{problem}  Is it true that  the intimacy of the incidence graph of $\PG(2,p)$ is always positive for  large enough prime numbers $p$? 
	\end{problem}
	
	Note that results concerning the intimacy or in general, concerning incidence-rich substructures are closely related to exact results of certain Zarankiewicz numbers, see \cite{Dam, Dz}. Indeed, it is very natural to seek extremal $C_4$ free subgraphs in incidence-rich spanned subgraphs of the incidence graph of a projective plane. Incidence graphs of block designs with large biholes (incidence free sets induced by two partite classes of the bipartite graph) have been investigated recently by Adriaensen, Mattheus and Spiro \cite{Adria}, which is also a related subject. Indeed, if $\cP_1\cup \cL_1$ is incidence free, then the union of the point set and line set $(\cP\setminus \cP_1)\cup \cL_1$ must be rich in   incidences. These connections motivate the problems below.
	\begin{problem}
		Describe in general internal partitions of incidence graphs of block designs or  incidence structures.   
	\end{problem}
	
	\begin{problem}
		Determine the intimacy of the incidence graph of other block designs.
	\end{problem}
	\textbf{Acknowledgement.}
	I would like to thank Tamás Héger for his suggestion concerning Construction \ref{oval}.

\end{document}